\providecommand{\U}[1]{\protect \rule{.1in}{.1in}}
\newtheorem{theorem}{Theorem}
\newtheorem{lemma}[theorem]{Lemma}
\newtheorem{remark}[theorem]{Remark}
\newenvironment{proof}[1][Proof]{\noindent \textbf{#1.} }{\  \rule{0.5em}{0.5em}}
\begin{document}

\date{}
\title{The Tracy-Widom distribution is not infinitely divisible.}
\author{J. Armando Dom\'{\i}nguez-Molina\\Facultad de Ciencias F\'{\i}sico-Matem\'{a}ticas\\Universidad Aut\'{o}noma de Sinaloa, M\'{e}xico}
\maketitle

\begin{abstract}
The classical infinite divisibility of distributions related to eigenvalues of
some random matrix ensembles is investigated. It is proved that the $\beta
$-Tracy-Widom distribution, which is the limiting distribution of the largest
eigenvalue of a $\beta$-Hermite ensemble, is not infinitely divisible.
Furthermore, for each fixed $N\geq2$ it is proved that the largest eigenvalue
of a GOE/GUE random matrix is not infinitely divisible.

\end{abstract}

\textit{Keywords:} beta Hermite ensembles; random matrices; largest
eigenvalue, tail probabilities.

\section{Introduction}

Random matrix theory is an important field in probability, statistics and
physics. One of the aims of random matrix theory is to derive limiting laws
for the eigenvalues of ensembles of large random matrices. In this sense this
note will focus in the study the behavior of eigenvalues of two types of
matrix ensembles, the invariant Hermite and the tridiagonal $\beta$-Hermite.

The invariant Hermite ensembles consist of the Gaussian orthogonal, unitary,
or symplectic ensembles, G(O/U/S)E, which are ensembles of $N\times N$ real
symmetric, complex Hermitian or Hermitian real quaternion matrices, $H,$
respectively, whose matrix elements are independently distributed random
Gaussian variables with probability density function (PDF) proportional,
modulo symmetries, to
\[
\exp \left(  -\tfrac{\beta}{4}\mathrm{tr}H^{2}\right)  ,
\]
here, $\beta=1,2$ or $4$ is used for the G(O/U/S)E ensembles, respectively.
The joint PDF of their ordered eigenvalues $\lambda_{1}\leq \cdots \leq
\lambda_{N}$ is given by%
\begin{equation}
k_{N,\beta}\prod_{1\leq i<j\leq N}\left \vert \lambda_{i}-\lambda
_{j}\right \vert ^{\beta}\exp \left(  -\tfrac{\beta}{4}\sum_{i=1}^{N}\lambda
_{i}^{2}\right)  ,\label{pdfEigen}%
\end{equation}
where $k_{N,\beta}$ is a non negative constant and for $\beta=1,2$ or $4,$ it
can be computed by Selberg's Integral formula (see \cite[Theorem 2.5.8]%
{agz10}). The PDF (\ref{pdfEigen}) exhibits strong dependence of the
eigenvalues of the G(O/U/S)E ensembles. For more details related to these
ensembles see \cite{m04}, \cite{dg09}, \cite{f10}, \cite[sections 2.5 and 4.1
]{agz10}. The law (\ref{pdfEigen}) has a physical sense since it describes a
one-dimensional Coulomb gas at inverse temperature $\beta$, \cite[Section
1.4]{f10}.\bigskip

Each member of the G(O/U/S)E ensembles leads, by the Householder reduction, to
a symmetric tridiagonal matrix $(H_{N}^{\beta})_{N\geq1}$ of the form
\begin{equation}
H_{N}^{\beta}:=\frac{1}{\sqrt{\beta}}\left[
\begin{array}
[c]{ccccc}%
N\left(  0,2\right)  & \chi_{\left(  n-1\right)  \beta} &  &  & \\
\chi_{\left(  n-1\right)  \beta} & N\left(  0,2\right)  & \chi_{\left(
n-2\right)  \beta} &  & \\
& \ddots & \ddots & \ddots & \\
&  & \chi_{2\beta} & N\left(  0,2\right)  & \chi_{\beta}\\
&  &  & \chi_{\beta} & N\left(  0,2\right)
\end{array}
\right]  ,\label{Tridi}%
\end{equation}
where $\chi_{t}$ is the $\chi$-distribution with $t$ degrees of freedom, whose
probability density function is given by $f_{t}\left(  x\right)
=2^{1-t/2}x^{t-1}e^{-x^{2}/2}/\Gamma \left(  t/2\right)  .$ Here,
$\Gamma \left(  \alpha \right)  =\int_{0}^{\infty}v^{\alpha-1}e^{-v}dv$ is
Euler's Gamma function. The matrix (\ref{Tridi}) has the important
characteristic that all entries in the upper triangular part are independent.
Trotter \cite{t84} apply the Householder reduction for the symmetric case
($\beta=1)$ while for the unitary and symplectic cases ($\beta=2,4$) the
former reduction was applied by Dumitriu and Edelman \cite{de02}. Furthermore
the late considers the ensemble (\ref{Tridi}) for general $\beta>0$ proving
that in this case the PDF of the ordered eigenvalues of $H_{N}^{\beta}$ is
still the PDF (\ref{pdfEigen}), see \cite[Chapter 20]{abd11} and \cite[Section
4.1]{agz10}. This matrix model it will named $\beta$-Hermite ensemble.\bigskip

The classical Tracy-Widom distribution is defined as the limit distribution of
the largest eigenvalue of a G(O/U/S)E random matrix ensemble. It is important
due to its applications in probability, combinatorics, multivariate
statistics, physics, among other applications. Tracy and Widom \cite{tw02},
\cite{tw09} have written concise reviews for the situations where their
distribution appears.

The $\beta$-Tracy-Widom distribution is defined as the limiting distribution
of the largest eigenvalue of a $\beta$-Hermite ensemble. In the case
$\beta=1,2$, $4$ the $\beta$-Tracy-Widom distribution coincides with the
classical Tracy-Widom, \cite{rrv11}.\bigskip

The main purpose of this paper is to determine the infinite divisibility of
the classical Tracy-Widom and $\beta$-Tracy-Widom distributions as well as the
infinite divisibility of the largest eigenvalue of the finite dimensional
random matrix of GOE and GUE ensembles.\bigskip

Recall that a random variable $X$ is said to be \textit{infinitely divisible}
if for each $n\geq1$, there exist independent random variables $X_{1}%
,...,X_{n}$ identically distributed such that $X$ is equal in distribution to
$X_{1}+\cdots+X_{n}$. This is an important property from the theoretical and
applied point of view, since for any infinitely divisible distribution there
is an associated L\'{e}vy process; Sato \cite{s13}, Rocha-Arteaga and Sato
\cite{ras03}. These jump processes have been recently used for modelling
purposes in a broad variety of different fields, including finance, insurance,
physics among others; see Barndorff-Nielsen \emph{et al.} \cite{bn01}, Cont
and Tankov \cite{ct03} and Podolskij \emph{et al. }\cite{pstv16} and for a
physicists point of view see Paul and Baschnagel \cite{pb13}. Other
applications concern deconvolution problems in mathematical physics, Carasso
\cite{c92}.\bigskip

This note is structured as follows: in section 2 it is presented preliminary
results on the tail behavior of the classical and generalized $\beta
$-Tracy-Widom distribution, useful to analyze infinite divisibility. The non
infinite divisibility of the classical and generalized $\beta$-Tracy-Widom
distribution is proved in Section 3. In Section 4 it is shown that for each
$N\geq2$ the largest eigenvalue of a G(O/U)E ensemble is not infinitely
divisible. Finally in Section 5 it is presented more new results and some open problems.

\section{Tracy-Widom distributions}

\subsection{Classical Tracy-Widom distribution}

It is well known that the unique possible limit distributions for the maximum
of independent random variables are the Gumbel, Fr\'{e}chet and Weibull
distributions. To classify the limit laws for the maximum of a large number of
non independent random variables is still open problem. A possible strategy is
to deal with particular models of non independent random variables.\bigskip

The eigenvalues of random matrices provide a good example of such non
independent random variables. For the Gaussian ensembles, i.e. for $N\times N
$ matrices with independent Gaussian entries, the joint density function of
their eigenvalues, $\lambda_{1}\leq \cdots \leq \lambda_{N}$ is given by
(\ref{pdfEigen}), and because the Vandermonde determinant $\prod_{1\leq
i<j\leq N}\left \vert \lambda_{i}-\lambda_{j}\right \vert ^{\beta}$ they are
strongly dependent. Due to the non independence of random variables with
density function (\ref{pdfEigen}) it follows that the limit distribution of
$\lambda_{\max}=\lambda_{N}$ is not an usual extreme distributions. The
distribution of $\lambda_{\max}$ converges in the limit $N\rightarrow \infty$
to the Tracy--Widom laws.\bigskip

Tracy-Widom, \cite{tw94}, \cite{tw96} proved that the following limit, which
is denoted by $F_{\beta},$ exists%
\[
F_{\beta}\left(  x\right)  :=\lim_{N\rightarrow \infty}P\left[  N^{1/6}\left(
\lambda_{\max}-2\sqrt{N}\right)  \leq x\right]  ,\beta=1,2,4
\]
and in this case%
\[
F_{2}\left(  x\right)  =\exp \left(  -\int_{x}^{\infty}\left(  s-x\right)
\left[  q\left(  s\right)  \right]  ^{2}ds\right)  ,
\]
where $q$ is given in terms of the solution to a Painlev\'{e} type II
equation, and%
\[
F_{1}\left(  x\right)  =\exp \left[  E\left(  x\right)  \right]  F_{2}%
^{1/2}\left(  x\right)  ,\  \ F_{4}\left(  x\right)  =\cosh \left[  E\left(
x\right)  \right]  F_{2}^{1/2}\left(  x\right)  ,
\]
where $E\left(  x\right)  =-\tfrac{1}{2}\int_{x}^{\infty}q\left(  s\right)
ds.$

Furthermore it can be deduced from \cite{tw94}, \cite{tw96}, \cite{tw09} and
\cite{bbd07} (see also \cite[Exercise 3.9.36]{agz10}) that the asymptotics for
$F_{\beta}\left(  x\right)  $ as $x\rightarrow \infty,$ for $\beta=1,2$ or $4$
is,
\begin{equation}
F_{\beta}\left(  -x\right)  =\exp \left \{  -\frac{1}{24}\beta x^{3}\left[
1+o\left(  1\right)  \right]  \right \}  ,\  \label{ltail}%
\end{equation}%
\begin{equation}
1-F_{\beta}\left(  x\right)  =\exp \left \{  -\frac{2}{3}\beta x^{3/2}\left[
1+o\left(  1\right)  \right]  \right \}  ,\label{rtail}%
\end{equation}
where $o\left(  1\right)  $ is the little-o of 1 which means that
$\lim \limits_{x\rightarrow \infty}o\left(  1\right)  =0.$\bigskip

With the tail probabilities (\ref{ltail}) and (\ref{rtail}) it is possible to
conclude that the Tracy-Widom distribution is not infinitely divisible for
$\beta=1,2,4$ using the results of Section 4. Nevertheless it is convenient go
to next section from which it will arrive at the non infinite divisibility of
$\beta$-Tracy-Widom distribution for any $\beta>0,$ by considering the limit
distribution of the largest eigenvalue of a matrix $H_{N}^{\beta}$ defined in
(\ref{Tridi}) for any $\beta>0$.

\subsection{$\beta$-Tracy-Widom distribution}

The tridiagonal $\beta$-Hermite ensemble (\ref{Tridi}) can be considered as a
discrete random Schr\"{o}dinger operator. This stochastic operator approach to
random matrix theory was conjectured by Edelman and Sutton \cite{es07}, and
was proved by Ram\'{\i}rez, Rider and Vir\'{a}g \cite{rrv11}, who in
particular established convergence of the largest eigenvalue of a $\beta
$-Hermite ensemble for any $\beta>0$. Let $\lambda_{\max}=\lambda_{N}\left(
H_{N}^{\beta}\right)  ,$ with $H_{N}^{\beta}$ defined as in (\ref{Tridi}), in
\cite{rrv11} it is shown the existence of a $\beta$-Tracy-Widom random
variable $TW_{\beta}$ such that%
\[
N^{1/6}\left(  \lambda_{\max}-2\sqrt{N}\right)  \overset{d}{\underset
{N\rightarrow \infty}{\longrightarrow}}TW_{\beta},
\]
where the $\beta$-Tracy-Widom random variable is identified through a random
variational principle:%
\[
TW_{\beta}:=\sup_{f\in L}\left \{  \tfrac{2}{\beta}\int_{0}^{\infty}%
f^{2}\left(  x\right)  db\left(  x\right)  -\int_{0}^{\infty}\left[  \left(
f^{\prime}\left(  x\right)  \right)  ^{2}+xf^{2}\left(  x\right)  \right]
dx\right \}  ,
\]
in which $x\rightarrow b\left(  x\right)  $ is a standard Brownian Motion and
$L,$ is the space of functions $f$ which satisfy $f\left(  0\right)
=0,\  \int_{0}^{\infty}f^{2}\left(  x\right)  dx=1,\int_{0}^{\infty}\left[
\left(  f^{\prime}\left(  x\right)  \right)  ^{2}+xf^{2}\left(  x\right)
\right]  dx<\infty.$\bigskip

The cases $\beta=1,2,4$ coincide with the classical Tracy-Widom distribution
$F_{\beta}\left(  x\right)  =P\left(  TW_{\beta}\leq x\right)  ,$
\cite{rrv11}. Ram\'{\i}rez, \emph{et al.} \cite{rrv11} also proved that the
tails of $TW_{\beta}$ are given by (\ref{ltail}) and (\ref{rtail}) for any
$\beta>0.$

\section{Non infinite divisibility of Tracy-Widom distributions}

First recall a well known result on a characterization of the Gaussian
distribution in terms of the tail behavior; see \cite[Corollary 4.9.9]{svh03}:
a non-degenerate infinitely divisible random variable $X$ has a normal
distribution if, and only if, it satisfies%
\begin{equation}%
%TCIMACRO{\QATOP{\lim\sup}{x\rightarrow\infty}}%
%BeginExpansion
\genfrac{}{}{0pt}{}{\lim \sup}{x\rightarrow \infty}%
%EndExpansion
\frac{-\log P\left(  \left \vert X\right \vert >x\right)  }{x\log x}%
=\infty.\label{critP}%
\end{equation}

Now, from (\ref{ltail}) and (\ref{rtail}) we get the following lemma, where,
as usual, the expression $f\left(  s\right)  \sim g\left(  s\right)  $ means
that $f\left(  s\right)  /g\left(  s\right)  $ tend to 1 when
$s\longrightarrow \infty.$

\begin{lemma}
\label{tstTW}(Two sided tails of the $\beta$-Tracy-Widom distribution) Let
$\beta>0,$ let $TW_{\beta}$ be a random variable $\beta$-Tracy-Widom
distributed. Then when $x\rightarrow \infty,$ it follows that
\[
P\left(  \left \vert TW_{\beta}\right \vert >x\right)  \sim P\left(  TW_{\beta
}>x\right)  =\exp \left(  -\tfrac{2}{3}\beta x^{3/2}\left[  1+o\left(
1\right)  \right]  \right)  .
\]

\end{lemma}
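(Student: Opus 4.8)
The plan is to derive the two-sided estimate directly from the one-sided asymptotics \eqref{ltail} and \eqref{rtail}, which hold for every $\beta>0$ by \cite{rrv11}. The key observation is that the left tail $F_{\beta}(-x)$ decays like $\exp(-\tfrac{1}{24}\beta x^{3}[1+o(1)])$, which is superexponentially smaller than the right tail $1-F_{\beta}(x)=\exp(-\tfrac{2}{3}\beta x^{3/2}[1+o(1)])$, since $x^{3}$ dominates $x^{3/2}$. Hence writing
\[
P\left(\left\vert TW_{\beta}\right\vert >x\right)=P\left(TW_{\beta}>x\right)+P\left(TW_{\beta}<-x\right)=\left(1-F_{\beta}(x)\right)+F_{\beta}(-x),
\]
the claim reduces to showing that the ratio $F_{\beta}(-x)/\bigl(1-F_{\beta}(x)\bigr)\to 0$ as $x\to\infty$, so that the sum is asymptotically equivalent to its dominant term $1-F_{\beta}(x)=P(TW_{\beta}>x)$.

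First I would fix $\beta>0$ and choose, for a small $\varepsilon\in(0,1)$, a threshold $x_{0}$ beyond which the $o(1)$ terms in \eqref{ltail} and \eqref{rtail} are bounded in absolute value by $\varepsilon$. Then for $x\geq x_{0}$ one has the crude bounds $F_{\beta}(-x)\leq\exp\bigl(-\tfrac{1}{24}\beta(1-\varepsilon)x^{3}\bigr)$ and $1-F_{\beta}(x)\geq\exp\bigl(-\tfrac{2}{3}\beta(1+\varepsilon)x^{3/2}\bigr)$. Dividing gives
\[
\frac{F_{\beta}(-x)}{1-F_{\beta}(x)}\leq\exp\left(-\tfrac{1}{24}\beta(1-\varepsilon)x^{3}+\tfrac{2}{3}\beta(1+\varepsilon)x^{3/2}\right),
\]
and the exponent tends to $-\infty$ as $x\to\infty$ because the cubic term overwhelms the $3/2$-power term. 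Consequently $F_{\beta}(-x)=o\bigl(1-F_{\beta}(x)\bigr)$, whence $P(\left\vert TW_{\beta}\right\vert>x)=\bigl(1-F_{\beta}(x)\bigr)(1+o(1))\sim P(TW_{\beta}>x)$, which is the asserted equivalence. The stated closed form for $P(TW_{\beta}>x)$ is then just a restatement of \eqref{rtail}.

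I do not expect any serious obstacle here; the statement is essentially a bookkeeping consequence of the fact that two different stretched-exponential rates with incomparable exponents cannot be of the same order. The only point requiring a little care is to keep the $o(1)$ quantifiers honest: one should phrase the argument so that the $1+o(1)$ appearing in the final expression for $P(\left\vert TW_{\beta}\right\vert>x)$ genuinely absorbs both the $o(1)$ inherited from \eqref{rtail} and the relative error $F_{\beta}(-x)/(1-F_{\beta}(x))$, which is why the factorization $P(\left\vert TW_{\beta}\right\vert>x)=(1-F_{\beta}(x))(1+o(1))$ and then substitution of \eqref{rtail} is the cleanest route, rather than trying to combine exponents prematurely.
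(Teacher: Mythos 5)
Your proof is correct and follows essentially the same route as the paper's: decompose $P(|TW_{\beta}|>x)$ into the left and right tails via (\ref{ltail}) and (\ref{rtail}) and show the left tail is asymptotically negligible because $x^{3}$ dominates $x^{3/2}$. Your handling of the $o(1)$ quantifiers with an explicit $\varepsilon$-threshold is in fact somewhat more careful than the paper's computation, which manipulates the $o(1)$ expressions directly inside the exponentials.
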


\begin{proof}
Using (\ref{ltail}) and (\ref{rtail}) we get
\begin{align*}
P\left(  \left \vert TW_{\beta}\right \vert >x\right)   & =P\left(  TW_{\beta
}<-x\right)  +1-P\left(  TW_{\beta}<x\right) \\
& =\exp \left(  -\frac{1}{24}\beta x^{3}\left(  1+o\left(  1\right)  \right)
\right)  +\exp \left(  -\tfrac{2}{3}\beta x^{3/2}\left[  1+o\left(  1\right)
\right]  \right)  ,
\end{align*}
and hence%
\begin{align*}
\lim_{x\rightarrow \infty}\frac{P\left(  \left \vert TW_{\beta}\right \vert
>x\right)  }{\exp \left(  -\tfrac{2}{3}\beta x^{3/2}\left[  1+o\left(
1\right)  \right]  \right)  }  & =1+\lim_{x\rightarrow \infty}\exp \left(
-\frac{1}{24}\beta x^{3}\left(  1+o\left(  1\right)  \right)  +\tfrac{2}%
{3}\beta x^{3/2}\left[  1+o\left(  1\right)  \right]  \right) \\
& =1.
\end{align*}

\end{proof}

Finally, using Lemma \ref{tstTW} it is possible to conclude that the $\beta
$-Tracy-Widom distribution is not infinite divisibility:

\begin{theorem}
\label{TWnoID}For any $\beta>0$ the $\beta$-Tracy Widom distribution is not
infinitely divisible.
\end{theorem}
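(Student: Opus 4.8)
The plan is to argue by contradiction using the characterization of the Gaussian distribution among infinitely divisible laws recalled just before Lemma~\ref{tstTW}, namely criterion~(\ref{critP}). Suppose $TW_{\beta}$ were infinitely divisible. It is certainly non-degenerate, so by \cite[Corollary 4.9.9]{svh03} exactly one of two things must hold: either $TW_{\beta}$ satisfies~(\ref{critP}) and is therefore normal, or it fails~(\ref{critP}). The strategy is to rule out both possibilities, and conclude that $TW_{\beta}$ cannot be infinitely divisible.

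First I would show that $TW_{\beta}$ does \emph{not} satisfy~(\ref{critP}). By Lemma~\ref{tstTW} we have $-\log P\left(\left\vert TW_{\beta}\right\vert>x\right)=\tfrac{2}{3}\beta x^{3/2}\left[1+o\left(1\right)\right]$ as $x\rightarrow\infty$. Dividing by $x\log x$ gives
\[
\frac{-\log P\left(\left\vert TW_{\beta}\right\vert>x\right)}{x\log x}=\frac{\tfrac{2}{3}\beta x^{3/2}\left[1+o\left(1\right)\right]}{x\log x}=\frac{\tfrac{2}{3}\beta x^{1/2}\left[1+o\left(1\right)\right]}{\log x}\longrightarrow\infty,
\]
since $x^{1/2}/\log x\rightarrow\infty$. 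So the $\limsup$ in~(\ref{critP}) is in fact a genuine limit equal to $+\infty$, and the criterion \emph{is} satisfied. Hence, if $TW_{\beta}$ were infinitely divisible, it would have to be normal.

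The second step is to rule out normality. This is the easy part: a normal random variable $X\sim N\left(\mu,\sigma^{2}\right)$ with $\sigma^2>0$ has Gaussian tails, $-\log P\left(X>x\right)\sim x^{2}/\left(2\sigma^{2}\right)$, which grows like $x^{2}$, whereas by Lemma~\ref{tstTW} the right tail of $TW_{\beta}$ satisfies $-\log P\left(TW_{\beta}>x\right)\sim\tfrac{2}{3}\beta x^{3/2}$, growing only like $x^{3/2}$. Since $x^{3/2}/x^{2}\rightarrow0$, the Tracy--Widom right tail is strictly heavier than any Gaussian right tail, so $TW_{\beta}$ is not normal. Combining the two steps: if $TW_{\beta}$ were infinitely divisible it would be normal by the first step, contradicting the second step. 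Therefore $TW_{\beta}$ is not infinitely divisible.

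I do not anticipate a serious obstacle here; the whole argument is a short deduction from Lemma~\ref{tstTW} and the cited characterization. The only point requiring a little care is making sure the $o(1)$ terms are handled correctly --- in particular, that $\tfrac{2}{3}\beta x^{3/2}[1+o(1)]$ divided by $x\log x$ still tends to $+\infty$ (which it does, since the bracket is bounded away from $0$ for large $x$), and that it is legitimate to read off the leading order $x^{3/2}$ when comparing against $x^{2}$. One should also state explicitly that $TW_{\beta}$ is non-degenerate, which is needed to invoke \cite[Corollary 4.9.9]{svh03}; this is immediate since $TW_{\beta}$ has a density (its distribution function $F_{\beta}$ is continuous and strictly increasing on $\mathbb{R}$).
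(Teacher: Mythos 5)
Your argument is correct and follows essentially the same route as the paper: both apply the Steutel--van Harn criterion~(\ref{critP}) together with the tail asymptotics of Lemma~\ref{tstTW} to show the $\limsup$ is infinite, forcing a putative infinitely divisible $TW_{\beta}$ to be normal, which its $\exp(-\tfrac{2}{3}\beta x^{3/2})$ right tail rules out. The only difference is cosmetic: the paper asserts non-normality without comment, whereas you justify it explicitly via the tail comparison, which is a reasonable bit of added care.
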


\begin{proof}
Let assume that $X$ is infinitely divisible and given that neither it is
normal nor degenerate we must have that (\ref{critP}) is false, that is%
\[
\lim_{x\rightarrow \infty}\frac{-\log P\left(  \left \vert X\right \vert
>x\right)  }{x\log x}<\infty.
\]
However,
\begin{align*}
\lim \limits_{x\rightarrow \infty}\frac{-\log P\left(  \left \vert X\right \vert
>x\right)  }{x\log x}  & =\lim \limits_{x\rightarrow \infty}\frac{-1}{x\log
x}\log \left(  \tfrac{P\left(  \left \vert X\right \vert >x\right)  \exp \left(
-\tfrac{2}{3}\beta x^{3/2}\left[  1+o\left(  1\right)  \right]  \right)
}{\exp \left(  -\tfrac{2}{3}\beta x^{3/2}\left[  1+o\left(  1\right)  \right]
\right)  }\right) \\
& =\lim \limits_{x\rightarrow \infty}\frac{1}{x\log x}\left \{  -\log \left(
\tfrac{P\left(  \left \vert X\right \vert >x\right)  }{\exp \left(  -\tfrac{2}%
{3}\beta x^{3/2}\left[  1+o\left(  1\right)  \right]  \right)  }\right)
+\tfrac{2}{3}\beta x^{3/2}\left[  1+o\left(  1\right)  \right]  \right \} \\
& =\lim \limits_{x\rightarrow \infty}\frac{\tfrac{2}{3}\beta \sqrt{x}}{\log
x}\left[  1+o\left(  1\right)  \right] \\
& =\infty,
\end{align*}
the third equality follows from Lemma \ref{tstTW}.
\end{proof}

\begin{remark}
Taking $\beta=1,2$ or $4$ in Theorem \ref{TWnoID} the non infinite
divisibility of the classical Tracy-Widom distribution is deduced.
\end{remark}

\section{Non infinite divisibility in the finite $N$ case}

The following Lemma is necessary to determine the non infinite divisibility of
the largest eigenvalue of a random matrix of a GOE/GUE ensemble.

\begin{lemma}
\label{crit2}If $X$ is a non Gaussian real random variable such that
\[
P\left(  \left \vert X\right \vert >x\right)  \leq ae^{-bx^{c}}\text{
with}\ a,b>0,\ c>1,
\]
then $X$ is not infinitely divisible.
\end{lemma}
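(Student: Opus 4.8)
The plan is to deduce this directly from the Gaussian characterization (\ref{critP}) recalled at the beginning of this section, so there is nothing deep to do. I would argue by contradiction. Suppose $X$ is infinitely divisible. We may assume $X$ is non-degenerate, since a degenerate law is a (zero-variance) normal law, contrary to the hypothesis that $X$ is non-Gaussian. Then the criterion (\ref{critP}) applies to $X$, and since $X$ is not normal, (\ref{critP}) must fail; that is,
\[
\limsup_{x\rightarrow\infty}\frac{-\log P\left(\left\vert X\right\vert>x\right)}{x\log x}<\infty .
\]

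The next step is to contradict this using the hypothesised tail bound. Taking logarithms in $P\left(\left\vert X\right\vert>x\right)\le ae^{-bx^{c}}$ gives $-\log P\left(\left\vert X\right\vert>x\right)\ge bx^{c}-\log a$ for every $x>0$ (the inequality being vacuous, and in fact even stronger, at points where $P\left(\left\vert X\right\vert>x\right)=0$). Hence, for all sufficiently large $x$,
\[
\frac{-\log P\left(\left\vert X\right\vert>x\right)}{x\log x}\ \ge\ \frac{bx^{c}-\log a}{x\log x}\ =\ \frac{b\,x^{c-1}}{\log x}-\frac{\log a}{x\log x},
\]
and since $c>1$ we have $x^{c-1}/\log x\rightarrow\infty$ as $x\rightarrow\infty$, so the right-hand side tends to $+\infty$. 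Therefore
\[
\limsup_{x\rightarrow\infty}\frac{-\log P\left(\left\vert X\right\vert>x\right)}{x\log x}=\infty ,
\]
contradicting the display forced above by infinite divisibility. Consequently $X$ cannot be infinitely divisible.

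I do not expect a genuine obstacle: the entire content is the reduction to (\ref{critP}), and the estimate above is elementary. The only point demanding a line of care is the degenerate case — a constant random variable is infinitely divisible and trivially meets the tail bound — which is precisely why one must read "non-Gaussian" as also excluding the degenerate normal (equivalently, assume $X$ non-degenerate); with that understood, the invocation of (\ref{critP}) is legitimate and the argument closes.
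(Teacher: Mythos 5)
Your proof is correct and follows essentially the same route as the paper's: both reduce the claim to the tail characterization (\ref{critP}) of the normal law among infinitely divisible distributions and then verify via the elementary estimate $-\log P(|X|>x)\ge bx^{c}-\log a$ that the $\limsup$ in (\ref{critP}) is infinite. Your extra remark handling the degenerate case explicitly is a small refinement the paper leaves implicit, but it does not change the argument.
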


\begin{proof}
As in the proof of Theorem \ref{TWnoID} it is only necessary to prove that $X
$ satisfy the limit (\ref{critP}). Indeed,
\[
\lim \limits_{x\rightarrow \infty}\frac{-\log P\left(  \left \vert X\right \vert
>x\right)  }{x\log x}\geq \lim \limits_{x\rightarrow \infty}\frac{-\log \left(
ae^{-bx^{c}}\right)  }{x\log x}=\lim \limits_{x\rightarrow \infty}\frac{-\log
a+bx^{c}}{x\log x}=\infty.
\]

\end{proof}

Consider, $\lambda_{\max}^{N}$ the largest eigenvalue of a GOE ensemble of
dimension $N.$ In \cite[Lemma 6.3]{adg01} is proved that the two sided tails
of the largest eigenvalue of a GOE satisfy the following inequality%
\[
P\left(  \left \vert \lambda_{\max}^{N}\right \vert \geq x\right)  \leq
e^{-Nx^{2}/9}%
\]
and if $\lambda_{\max}^{N}$ the largest eigenvalue of a GUE ensemble of
dimension $N.$ The following inequality is deduced in \cite{l03}
\[
P\left(  \left \vert \lambda_{\max}^{N}-E\left(  \lambda_{\max}^{N}\right)
\right \vert \geq x\right)  \leq2e^{-2Nx^{2}}.
\]
from which, with help of Lemma \ref{crit2}, the next theorem follows:

\begin{theorem}
Let $\lambda_{\max}^{N}$ be the largest eigenvalue of a random matrix of a
GOE/GUE ensemble of random matrices. For all $N\geq2,\  \lambda_{\max}^{N}$ is
not infinitely divisible.
\end{theorem}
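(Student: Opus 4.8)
The plan is to verify the two hypotheses of Lemma \ref{crit2} --- a suitable upper tail bound and non-Gaussianity --- for $\lambda_{\max}^{N}$ in the GOE case, and for the centred variable $\lambda_{\max}^{N}-E(\lambda_{\max}^{N})$ in the GUE case, and then to transfer the conclusion back to $\lambda_{\max}^{N}$ itself in the GUE case. The tail hypothesis is immediate: the bound $P(|\lambda_{\max}^{N}|\geq x)\leq e^{-Nx^{2}/9}$ of \cite[Lemma 6.3]{adg01} has the form $ae^{-bx^{c}}$ with $a=1$, $b=N/9$, $c=2>1$, and the bound $P(|\lambda_{\max}^{N}-E(\lambda_{\max}^{N})|\geq x)\leq 2e^{-2Nx^{2}}$ of \cite{l03} has that form with $a=2$, $b=2N$, $c=2>1$. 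So the only real content is to show that $\lambda_{\max}^{N}$ (respectively its centred version) is non-Gaussian for $N\geq2$; this is a genuine requirement, because a non-degenerate Gaussian law also satisfies a bound of the form $ae^{-bx^{2}}$ for small enough $b$, which is precisely why non-Gaussianity is imposed as a hypothesis in Lemma \ref{crit2}.

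To establish non-Gaussianity I would compare the upper and lower tails of $\lambda_{\max}^{N}$. Let $H=(H_{ij})$ denote the underlying GOE (respectively GUE) matrix; its diagonal entries $H_{11},\dots,H_{NN}$ are independent real Gaussians, and for each standard basis vector $e_{i}$ one has $\lambda_{\max}^{N}\geq e_{i}^{\top}He_{i}=H_{ii}$. From $\lambda_{\max}^{N}\geq H_{11}$ we get a Gaussian lower bound for the upper tail, and from $\{\lambda_{\max}^{N}<-x\}\subseteq\bigcap_{i=1}^{N}\{H_{ii}<-x\}$ together with independence we get $P(\lambda_{\max}^{N}<-x)\leq\prod_{i=1}^{N}P(H_{ii}<-x)$, an $N$-fold product of Gaussian tails. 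Taking logarithms and using $\lim_{x\to\infty}\bigl(-\log P(H_{11}>x)\bigr)/x^{2}=\kappa$ for a suitable constant $\kappa>0$, this yields $\limsup_{x\to\infty}\bigl(-\log P(\lambda_{\max}^{N}>x)\bigr)/x^{2}\leq\kappa$ and $\liminf_{x\to\infty}\bigl(-\log P(\lambda_{\max}^{N}<-x)\bigr)/x^{2}\geq N\kappa$. If $\lambda_{\max}^{N}$ were Gaussian it would be non-degenerate (the eigenvalue law (\ref{pdfEigen}) is absolutely continuous), say $N(\mu,\sigma^{2})$ with $\sigma>0$, and then both ratios would converge to the common value $1/(2\sigma^{2})$; that forces $N\kappa\leq\kappa$, i.e.\ $N\leq1$, contradicting $N\geq2$. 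Hence $\lambda_{\max}^{N}$ is non-Gaussian for $N\geq2$, and the identical computation applies to the GUE matrix.

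It then remains to assemble the pieces. In the GOE case, Lemma \ref{crit2} applies directly to $\lambda_{\max}^{N}$ and shows it is not infinitely divisible. In the GUE case, the translate $X:=\lambda_{\max}^{N}-E(\lambda_{\max}^{N})$ is non-Gaussian (a deterministic shift of a non-Gaussian variable) and satisfies $P(|X|\geq x)\leq 2e^{-2Nx^{2}}$, so Lemma \ref{crit2} shows $X$ is not infinitely divisible; since infinite divisibility is invariant under deterministic translation, $\lambda_{\max}^{N}=X+E(\lambda_{\max}^{N})$ is not infinitely divisible either.

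The step I expect to be the main obstacle is the non-Gaussianity verification: Lemma \ref{crit2} is vacuous without it, and one needs an argument that is uniform in $N\geq2$ yet correctly breaks down at $N=1$, where $\lambda_{\max}^{1}$ is precisely an $N(0,2)$ variable and hence trivially infinitely divisible. The tail-rate mismatch above, driven by the crude bound $\lambda_{\max}^{N}\geq\max_{1\leq i\leq N}H_{ii}$ and by the independence of the diagonal entries, is the key point; a competing but less uniform route would use the explicit law of $\lambda_{\max}^{N}$ for small $N$, e.g.\ for $N=2$ in the GUE one has $\lambda_{\max}^{2}\overset{d}{=}G+R$ with $G$ Gaussian and $R$ an independent non-Gaussian multiple of a $\chi_{3}$ variable, so that Cram\'{e}r's theorem already forbids $\lambda_{\max}^{2}$ from being Gaussian.
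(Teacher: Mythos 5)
Your proof takes the same route as the paper: the concentration inequalities of \cite[Lemma 6.3]{adg01} (GOE) and \cite{l03} (GUE) supply the tail hypothesis of Lemma \ref{crit2} with $c=2>1$, and the lemma then gives non-infinite-divisibility; the paper's proof consists of exactly this and nothing more. Your additional verifications are correct and welcome: the non-Gaussianity of $\lambda_{\max}^{N}$ for $N\geq2$ via the tail-rate mismatch between $P(\lambda_{\max}^{N}>x)\geq P(H_{11}>x)$ and $P(\lambda_{\max}^{N}<-x)\leq\prod_{i}P(H_{ii}<-x)$ (using $H_{ii}=e_i^{\top}He_i\leq\lambda_{\max}^{N}$ and the independence of the diagonal entries), and the reduction of the GUE case to the centred variable via translation invariance of infinite divisibility, are both hypotheses the paper uses implicitly without checking.
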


\begin{remark}
The case $N=2$ follows from Dom\'{\i}nguez-Molina and Rocha-Arteaga
\cite{dmra07}.
\end{remark}

\section{Discussion}

Recall that an infinitely divisible random variable, $X,$ in $\mathbb{R}_{+}$
must comply that $-\log P\left(  X>x\right)  \leq ax\log x,$ for some $a>0$
and $x$ sufficiently large, \cite{s79}. With this result it is possible to
deduce the non infinite divisibility of the following random
variables:\bigskip

I) Wigner surmise: $P\left(  s\right)  =\frac{\pi}{2}s\exp \left(  -\frac{\pi
}{4}s^{2}\right)  .$

II) The absolute value of $TW_{\beta},$ $Y_{\beta}=\left \vert TW_{\beta
}\right \vert $.

III) The truncation to the left or to the right of $TW_{\beta}$.\bigskip

\emph{Open problems}

\begin{enumerate}
\item \emph{Free infinite divisibility of the classical Tracy-Widom
distribution or the general} $\beta$-\emph{Tracy-Widom distribution.}

\item \emph{For each }$N\geq2,$ \emph{the non infinite divisibility of the
largest eigenvalue of random matrix of a GSE ensemble.}

\item \emph{Determine if the Tracy-Widom distribution is indecomposable.}

\item \emph{Investigate the infinite divisibility of }$\lambda_{\max}\left(
H_{N}^{\beta}\right)  $\emph{\ in the tridiagonal }$\beta$\emph{-Hermite
ensemble (\ref{Tridi}).} The article \cite{lr10} it may be useful.

\item \emph{Look for an infinitely divisible interpolation between Tracy-Widom
distribution and other distribution (except in the Tracy Widom case).
}Johanson \textbf{\cite{jo07} }discuss interpolation between the Gumbel
distribution, and the Tracy-Widom distribution. Bohigas et al
\textbf{\cite{bcp09}} discuss a continuous transition from Tracy-Widom
distribution to the Weilbull distribution, and from Tracy-Widom distribution
to Gaussian distribution. It is not known if these interpolations are
infinitely divisible (except in the Tracy Widom case).
\end{enumerate}

\end{document}